\newtheorem{thm}{Theorem}[section]
\newtheorem{prop}[thm]{Proposition}
\newtheorem{rem}[thm]{Remark}
\newtheorem{cor}[thm]{Corollary}
\newtheorem{defn}[thm]{Definition}
\begin{document}

\begin{center}
{\Large \textbf{A Note on the Soft Group Category}} \vspace*{0.5cm}
\end{center}

\vspace*{0.3cm}
\begin{center}
Nazmiye Alemdar$^{*,1}$, Hasan Arslan$^{*,2}$ \\
$^{*}${\small {\textit{Department of  Mathematics, Faculty of Science, Erciyes University, 38039, Kayseri, Turkey}}}\\
{\small {\textit{ $^{1}$nakari@erciyes.edu.tr}}}~~{\small {\textit{$^{2}$ hasanarslan@erciyes.edu.tr}}}\\

\end{center}

\begin{abstract}
The main purpose of this paper is to introduce the structure of soft group category. In this category, we determine some special objects and morphisms having a universal structure such as the final object and product. Therefore, the category of soft groups is a symmetric monodial category.
\end{abstract}

\textbf{Keywords}: Soft groups, category, split monic, hyperoctahedral group \\

\textbf{2020 Mathematics Subject Classification}: 18A05, 18A10, 18A20, 19D23, 20F55
\\

\section{Introduction} 
\label{intro}
The real world is too complex for us to understand and interpret it. Therefore, simplified reality models of the real world are tried to be obtained. But these mathematical models are also very complex and it is very difficult to analyze them. Uncertainty in data makes classical methods unsuccessful when trying to shape problems in engineering, physics, computer science, economics, social sciences, health sciences, etc. Therefore, it is not entirely appropriate to use classical set theory based on exact cases when solving problems with such uncertainties. To deal with these problems, many mathematical theories such as fuzzy set theory, intuitionistic fuzzy set theory, indeterminate set theory, mathematical time theory and rough set theory have been defined.These theories are used as tools against uncertain situations. However, it has been seen that all these theories have their own problems. 

According to Molodtsov, these difficulties are most likely due to the inadequacy of the theory's parameterization tools. Molodtsov, freed from these troubles, put forward the soft set theory as a mathematical theory in 1999 in \cite{br4}. In his works, he successfully applied this new theory and its results to other fields such as probability theory, Perron integration, Riemann integration, research operation, game theory etc. In most of the games you need to design human behaviors or human models. There are many approaches to explain human behavior in game theory such as payment and selection functions. A selection function is a transformation that relates a set of strategies to a particular situation. Molodsov defined the \textit{soft function} as a mathematical tool that retains all the good sides of the selection function and eliminates the drawbacks of the pay function and the selection function. Tripathy et al. [7] also defined the basic definitions and concepts for soft sets in the decision-making process and provided the use of soft sets in the decision-making process based on game theory. 

Recently, many authors have worked on the algebraic structures of soft sets. Aktas and Cagman \cite{br1} introduced the soft group structure depending on definition of soft sets given by Molodsov and showed a way to construct algebraic structures based on concept of soft set.

As for category theory, the category theory was introduced by Samuel Elenberg and Saunders Maclane in the 1940s. The main purpose of category theory is to model and solve some problems in a simpler way by using objects and morphisms. 

Category theory is a comprehensive area of study in mathematics that examines in an abstract way the basic common language used to describe structures  occuring in different contexts. These developments in computer science plays very important role in programming studies, logic and authentication in computer science. 

In category theory, all information about objects is encoded with morphisms between them. In order to examine the internal structure of an object, not only the object itself but also the relations of this object with other objects in the category are considered. The characterization of the relations between any particular type of object and the rest of the universe in which it is located is called a universal structure and this situation is very common in category theory. For this reason, it should be investigated whether the category has special objects and morphisms, if any, in order to create the universal structure associated with the category.

If any category has a finite product and a final object and also both the product is a monoidal product and the final object is unit in this category, then this category is called a \textit{cartesian monoidal category} \cite{br2}. Any cartesian monoidal category is a symmetric monoidal category \cite{br3}. Open games can be considered as morphisms of a symmetric monoidal category with objects consisting of pairs of sets. However, morphisms in the symmetric monoidal category are also illustrations of Feynman diagrams in quantum field theory.

In this paper, we obtain the category of soft groups and investigate the structures of both special objects (final object and product) and morphisms (epimorphism, monomorphism)  in the category of soft groups as an analogue to Mac Lane's study \cite{br2}.

\section{Preliminaries}
\begin{defn}\label{}
Let $U$, $P(U)$ and $E$ be a  universal set, the power set of $U$ and a set of parameters, respectively. If $F: A \rightarrow P(U)$ is a function, then the pair $(F,A)$ is said to be a soft set over $U$, where $A \subseteq E$ \cite{br4}.  
\end{defn} 

\begin{defn}\label{}
Let $(F,A)$ and $(G,B)$ be any two soft sets over the same universal set $U$. If the following conditions are satisfied, then $(F,A)$ is called a soft subset of $(G,B)$ and we denote it by $(F,A) \sqsubseteq (G,B)$ \cite{br5}:
\begin{enumerate}
    \item $A \subseteq B$ \\
    \item $F(a)$ and $G(a)$ are two identical  predictions for every $a \in A$.
\end{enumerate}
\end{defn}
See \cite{br4}, \cite{br1}, \cite{br5} for more detailed information about soft sets.

\begin{defn}\label{}
Let $G$ be any group and let $(F,A)$ be soft set over $G$. If $F(a)$ is a subgroup of $G$ for each $a \in A$, then the pair $(F,A)$ is called a soft group over $G$ \cite{br1}.
\end{defn}

As a convention, throughout this paper, we denote by $(F,A)_G$ the soft group $(F,A)$ over the group $G$.

\begin{defn}\label{}
Let $(F,A)_G$ be a soft group.  If $F(x)=\{e\}$ for all $x \in A$, then $(F,A)_G$ is said to be a trivial or identity soft group, where $e$ stands for the identity element of $G$ \cite{br1}.
\end{defn}

\begin{defn}\label{}
Let $(F,A)_G$ be a soft group.  If $F(x)=G$ for all $x \in A$, then $(F,A)_G$ is called completely soft group \cite{br1}.
\end{defn}

\begin{defn}\label{}
Let $(F,A)_G$ and $(H,B)_K$ be two soft groups.  If there exists a group homomorphism $f: G \rightarrow K$ and a function $p:A \rightarrow B$ such that
$$\hat{f} \circ F=H \circ p$$
then the pair $(f,p)$ is said to be a soft group homomorphism from 
$(F,A)_G$ to $(H,B)_K$, where $f$ and $\hat{f}$ are identical on the power set $P(G)$ \cite{br6}.
\end{defn}
In other words, the pair $(f,p)$ is a soft group homomorphism if and only if the following diagram is commutative: 

\begin{equation}\label{sgh}
  \begin{tikzcd}
A \arrow{r}{F} \arrow[swap]{d}{p} & P(G) 
\arrow{d}{\hat{f}} \\%
B \arrow{r}{H} & P(K)
\end{tikzcd}
~~~~~~~~~~~~~~~~~~~~~~~~~~~
 \begin{tikzcd}
G \arrow[swap]{d}{f}\\%
K 
\end{tikzcd}  
\end{equation}

\begin{defn}\
Let the pair $(f,p)$ be a soft group homomorphism from 
$(F,A)_G$ to $(H,B)_K$. If $f$ is a group isomorphism and $p$ is a bijection then the soft groups $(F,A)_G$ and $(H,B)_K$ are called isomorphic and written $(F,A)_G \cong (H,B)_K$\cite{br6}.
\end{defn}

\begin{thm}\label{composition}
The composition of two soft group homomorphisms is a soft group homomorphism \cite{br6}.
\end{thm}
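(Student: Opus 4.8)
The plan is to take two composable soft group homomorphisms and verify that their componentwise composite again satisfies the defining commutativity condition. Concretely, suppose $(f,p)$ is a soft group homomorphism from $(F,A)_G$ to $(H,B)_K$ and $(g,q)$ is a soft group homomorphism from $(H,B)_K$ to $(I,C)_L$, so that $f:G\to K$, $g:K\to L$, $p:A\to B$, $q:B\to C$, while $F:A\to P(G)$, $H:B\to P(K)$ and $I:C\to P(L)$. I would propose the pair $(g\circ f,\, q\circ p)$ as the composite and show it is a soft group homomorphism from $(F,A)_G$ to $(I,C)_L$.

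First I would record the two hypotheses, read off from the commutative square \eqref{sgh} for each homomorphism, as the identities
$$\hat{f}\circ F = H\circ p \qquad\text{and}\qquad \hat{g}\circ H = I\circ q.$$
I would also note that $g\circ f:G\to L$ is a group homomorphism, since the composite of group homomorphisms is again a group homomorphism, and that $q\circ p:A\to C$ is an ordinary function. Thus the two components of the proposed composite are of the correct type, and it remains only to check the compatibility square $\widehat{g\circ f}\circ F = I\circ(q\circ p)$.

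The key step is to establish that the hat operation is compatible with composition, namely $\widehat{g\circ f}=\hat{g}\circ\hat{f}$ as maps $P(G)\to P(L)$. This follows directly from the fact that $\hat{f}$ acts on a subset as the forward image: for any $S\subseteq G$ we have $(g\circ f)(S)=g(f(S))$, hence $\widehat{g\circ f}(S)=\hat{g}(\hat{f}(S))$. With this identity in hand, the required commutativity is a short pasting of the two given squares:
$$\widehat{g\circ f}\circ F = \hat{g}\circ\hat{f}\circ F = \hat{g}\circ(H\circ p) = (\hat{g}\circ H)\circ p = (I\circ q)\circ p = I\circ(q\circ p).$$
This is exactly the condition for $(g\circ f,\, q\circ p)$ to be a soft group homomorphism from $(F,A)_G$ to $(I,C)_L$.

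The only genuinely nonroutine point is the functoriality identity $\widehat{g\circ f}=\hat{g}\circ\hat{f}$; once it is in place, the remainder is pure substitution using the two commuting squares. I would therefore isolate that identity as a small preliminary observation and then carry out the diagram chase above, which is the categorical heart of the argument and also foreshadows the associativity and identity laws needed to organize soft groups into a category.
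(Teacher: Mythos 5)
Your proposed proof is correct and complete. Note that the paper itself offers no proof of this statement: it is quoted from Sezgin--Atag\"un \cite{br6} and used as a black box, so there is nothing internal to compare against. Your argument is the standard one: define the composite componentwise as $(g\circ f,\, q\circ p)$, observe that $g\circ f$ is again a group homomorphism, and paste the two commutative squares. You correctly isolate the only nontrivial ingredient, the functoriality of the direct-image operation, $\widehat{g\circ f}=\hat{g}\circ\hat{f}$ on $P(G)$, which holds because $(g\circ f)(S)=g(f(S))$ for every subset $S\subseteq G$; this is consistent with the paper's convention that $\hat{f}$ denotes the map induced by $f$ on power sets. With that identity the chain $\widehat{g\circ f}\circ F=\hat{g}\circ H\circ p=I\circ(q\circ p)$ closes the required square, so no gap remains.
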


\begin{defn}[\cite{br1}]
Let $(F,A)_G$ and $(H,B)_K$ be two soft groups. \textit{Soft product} of the soft groups $(F,A)_G$ and $(H,B)_K$ is defined as 
$$U(x,y)=F(x)\times H(y)$$
for all $(x,y) \in A \times B$, and it is represented by
$$(F,A)_G \hat{\times} (H,B)_K=(U,A \times B)_{G \times K}.$$
\end{defn}
This concept can be generalized to three or finitely many soft groups in the following way:

\begin{defn}\label{gproduct}
Let $(F_1,A_1)_{G_1}, (F_2,A_2)_{G_2}, \cdots, (F_n,A_n)_{G_n}$ be soft groups. Soft product of these soft groups is defined as 
$$U(x_1,x_2,\cdots,x_n)=F_1(x_1) \times F_2(x_2) \cdots \times F_n(x_n)$$
for all $(x_1,x_2,\cdots, x_n) \in A_1 \times A_2 \cdots \times A_n$, and denoted by
$$(F_1,A_1)_{G_1} \hat{\times} (F_2,A_2)_{G_2} \hat{\times} \cdots \hat{\times} (F_n,A_n)_{G_n}=(U,A_1 \times A_2 \cdots A_n)_{G_1 \times G_2 \cdots \times G_n}.$$
\end{defn}

\section{Soft Group Category}
The objects of the soft group category are soft groups and the morphisms between these objects are soft group homomorphisms. The composition in this category is defined as the composition of soft group homomorphisms.
\begin{prop}
Soft groups and soft group homomorphisms between them forms a category. 
\end{prop}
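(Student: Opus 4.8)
The plan is to verify the three defining data of a category --- objects, morphisms equipped with a composition law, and identities --- and then to check the associativity and unit axioms. The objects are taken to be all soft groups $(F,A)_G$, and for two such objects $(F,A)_G$ and $(H,B)_K$ the morphisms are the soft group homomorphisms $(f,p)$ between them, that is, pairs consisting of a group homomorphism $f \colon G \to K$ and a function $p \colon A \to B$ making the square in \eqref{sgh} commute. For composition I would take the evident componentwise rule: given $(f,p) \colon (F,A)_G \to (H,B)_K$ and $(g,q) \colon (H,B)_K \to (L,C)_M$, set
$$(g,q) \circ (f,p) = (g \circ f,\, q \circ p).$$

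First I would record that this composite is again a soft group homomorphism; this is precisely Theorem \ref{composition}, so closure of the morphism class under composition is already in hand. Next I would introduce, for each object $(F,A)_G$, the candidate identity morphism $(\mathrm{id}_G, \mathrm{id}_A)$, where $\mathrm{id}_G$ is the identity automorphism of $G$ and $\mathrm{id}_A$ the identity map of $A$. To confirm that this pair is a legitimate morphism one only checks the defining commutativity, which is immediate since $\widehat{\mathrm{id}_G} \circ F = F = F \circ \mathrm{id}_A$ (the induced map $\widehat{\mathrm{id}_G}$ being the identity on $P(G)$).

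It then remains to verify the axioms. Associativity of $\circ$ reduces to associativity in each coordinate separately: for a composable triple $(f,p)$, $(g,q)$, $(h,r)$ one has $(h \circ g) \circ f = h \circ (g \circ f)$ among the group homomorphisms and $(r \circ q) \circ p = r \circ (q \circ p)$ among the parameter maps, both of which hold because composition of maps (and of group homomorphisms) is associative. The unit laws follow in the same spirit, since $(f,p) \circ (\mathrm{id}_G, \mathrm{id}_A) = (f \circ \mathrm{id}_G,\, p \circ \mathrm{id}_A) = (f,p)$ and symmetrically on the other side.

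The content of the argument is therefore almost entirely bookkeeping: because morphisms are pairs and composition is performed coordinatewise, every axiom is inherited from the category of groups in the first coordinate and from the category of sets in the second. The only step carrying genuine mathematical content --- that a componentwise composite really does satisfy the commutativity condition defining a soft group homomorphism --- is already supplied by Theorem \ref{composition}, so I do not anticipate a real obstacle. The one point requiring mild care is simply to confirm that the identity pair qualifies as a morphism and that the two commutative squares defining a composite paste together correctly.
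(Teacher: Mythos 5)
Your proposal is correct and follows the same route as the paper, which simply declares the result clear from the commutative-diagram definition in (\ref{sgh}) and Theorem \ref{composition}; you have merely written out the componentwise verification of identities, associativity, and unit laws that the paper leaves implicit.
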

\begin{proof}
The proof is clear from the equation (\ref{sgh}) and Theorem \ref{composition}.
\end{proof}
Note here that in this category for each object $(F,A)_G$, the unit morphism is defined as the soft group homomorphism $(1_G,1_A)$, where $1_G~:~G \mapsto G$ is the identity group homomorphism and $1_A~:~A\mapsto A$ is an identity map. Throughout this paper, this category is doneted by $SGp$.

We will give the definition of soft kernel which is not included in the literature and will be needed to prove that a monic morphism is one-to-one in the category of soft groups inspired by group theory.

\begin{defn}
Let $(F,A)_G$ and $(H,B)_K$ be any two soft groups. Let $(f,p): (F,A)_G \rightarrow (H,B)_K$ be a soft group homomorphism. Let $A'$ be the set consisting of $x \in A$ such that $F(x)=Kerf$. Having fixed 
\begin{equation} \label{kernel}
A'=\{x \in A : F(x)=Kerf \},
\end{equation}
we define $F'$ as the restriction function of $F$ to $A'$. Then the soft group $(F',A')_G$ is called the soft kernel of $(f,p)$.
\end{defn}
\begin{rem}
The soft kernel of a soft group homomorphism $(f,p): (F,A)_G \rightarrow (H,B)_K$ may not always be defined. One can construct a soft group homomorphism with the soft kernel $(f,p): (F,A)_G \rightarrow (H,B)_K$ by defining $F(x)=Kerf$ for some $x \in A$.
\end{rem}

\begin{thm}
Let $(F,A)_G$ and $(H,B)_K$ be any two soft groups. Assume that $(f,p): (F,A)_G \rightarrow (H,B)_K$ a soft group homomorphism such that $F(x)=Kerf$ for some $x \in A$. Then $f$ is injective if and only if $(F',A')_G$, which is the soft kernel of $(f,p)$, is the trivial soft subgroup.
\end{thm}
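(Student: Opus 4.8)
The plan is to argue directly from the definition of the soft kernel, since both implications reduce to a single observation about the indices at which $F$ equals $\ker f$. The very first step I would record is that the standing hypothesis, namely that $F(x) = \ker f$ for some $x \in A$, guarantees that the index set $A' = \{x \in A : F(x) = \ker f\}$ is nonempty, so that the soft kernel $(F',A')_G$ is genuinely defined with $F'$ the restriction of $F$ to $A'$. I single this out because it is precisely the ingredient that makes the reverse implication work.

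For the forward direction I would assume $f$ is injective. Since $f$ is a group homomorphism, injectivity is equivalent to $\ker f = \{e\}$. By definition $F'$ is the restriction of $F$ to $A'$, so for every $x \in A'$ one has $F'(x) = F(x) = \ker f = \{e\}$. Hence $(F',A')_G$ satisfies the defining condition of a trivial (identity) soft group, completing this direction.

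For the reverse direction I would assume $(F',A')_G$ is the trivial soft subgroup, i.e. $F'(x) = \{e\}$ for all $x \in A'$. Invoking the nonemptiness established in the first step, I choose any $x_0 \in A'$. Then $\ker f = F(x_0) = F'(x_0) = \{e\}$, and a group homomorphism with trivial kernel is injective, so $f$ is injective.

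The argument presents no serious obstacle; the only point that needs care is recognizing that the equivalence genuinely relies on the hypothesis that $F(x) = \ker f$ for some $x \in A$. Without it the index set $A'$ could be empty, the soft kernel would be vacuously trivial, and the reverse implication would break down. I would therefore emphasize that this hypothesis is exactly what bridges the soft-theoretic statement (triviality of $(F',A')_G$) and the group-theoretic statement ($\ker f = \{e\}$).
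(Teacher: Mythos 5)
Your proof is correct and follows essentially the same route as the paper's: both directions are read off directly from the definition of the soft kernel and the equivalence of injectivity with $\ker f=\{e\}$. The only difference is that you explicitly flag the nonemptiness of $A'$ (guaranteed by the hypothesis $F(x)=\ker f$ for some $x\in A$) as the point that makes the reverse implication non-vacuous, which the paper leaves implicit; this is a welcome clarification rather than a departure.
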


\begin{proof}
Let $f: G \rightarrow H$ be a monomorphism and let $F'$ be the restriction of the function $F$ to $A'$, where $A'$ is defined as in the equation (\ref{kernel}). Thus the pair $(F',A')_G$ is the trivial soft subgroup of $(F,A)_G$.
Conversely, let $(F',A')_G$ be the trivial soft subgroup, where $(F',A')_G$ is the soft kernel of $(f,p)$. We can write $F'(x)=Kerf=\{ e \}$ for all $x \in A'$. Therefore, $f$ is an injective group homomorphism.
\end{proof}

\begin{defn}\label{equality}
 Let $(f,p)$ and $(g,q)$ be two soft group homomorphisms. Then $(f,p)$ is equal to $(g,q)$ if and only if $f=g$ and $p=q$.
\end{defn}

\begin{thm}
Let $(f,p): (F,A)_G \rightarrow (H,B)_K$ be a soft group homomorphism such that $F(x)=Kerf$ for some $x \in A$. If $(f,p)$ is monic, then $f$ and $p$ are injective.
\end{thm}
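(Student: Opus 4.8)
The plan is to establish the two coordinates separately, treating $f$ first and then using its injectivity to handle $p$. Throughout I will use that, by Definition \ref{equality}, two soft group homomorphisms coincide exactly when both of their coordinates agree, and that (by Theorem \ref{composition} and its construction) the composite of $(f,p)$ with $(g,q)$ is $(f\circ g,\,p\circ q)$. The monic hypothesis is exploited in contrapositive form: to show a coordinate is injective I exhibit, under the assumption that it is not, two soft group homomorphisms $\alpha_1\neq\alpha_2$ into $(F,A)_G$ whose composites with $(f,p)$ agree, contradicting that $(f,p)$ is monic.

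First I would prove that $f$ is injective. Suppose not, so that $\mathrm{Ker}\,f\neq\{e\}$; by hypothesis $F(x_0)=\mathrm{Ker}\,f$ for some $x_0\in A$, so $\mathrm{Ker}\,f$ genuinely occurs as a value of $F$. The crucial point is that a soft group homomorphism $(g,q):(E,C)_L\to(F,A)_G$ must satisfy the rigid condition $g(E(t))=F(q(t))$ for every $t\in C$, so the image subgroups of $g$ are forced to be values of $F$; the hypothesis is precisely what makes $\mathrm{Ker}\,f$ an admissible such value. I would therefore take the test object to be the completely soft group $(E,\{*\})_L$ over a free group $L$ surjecting onto $\mathrm{Ker}\,f$ (for instance the free group on the underlying set of $\mathrm{Ker}\,f$), with $E(*)=L$. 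Choosing two distinct surjections $\pi_1\neq\pi_2:L\twoheadrightarrow \mathrm{Ker}\,f$ — which exist because $|\mathrm{Ker}\,f|\geq 2$ — and setting $q(*)=x_0$, both $\alpha_i=(\pi_i,q)$ are genuine soft group homomorphisms, since $\pi_i(E(*))=\mathrm{Ker}\,f=F(x_0)$. As $\operatorname{im}\pi_i\subseteq \mathrm{Ker}\,f$, the maps $f\circ\pi_1$ and $f\circ\pi_2$ are both trivial, whence $(f,p)\circ\alpha_1=(f,p)\circ\alpha_2$ while $\alpha_1\neq\alpha_2$, contradicting monicity. Hence $\mathrm{Ker}\,f=\{e\}$ and $f$ is injective; equivalently, this says the soft kernel is the trivial soft subgroup, matching the earlier characterisation.

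Next I would deduce that $p$ is injective, now using that $f$ is injective. Suppose $p(a_1)=p(a_2)$. The commutativity $\hat f\circ F=H\circ p$ gives $f(F(a_1))=H(p(a_1))=H(p(a_2))=f(F(a_2))$, and since $f$ is injective this forces $F(a_1)=F(a_2)=:S$, a subgroup of $G$. I would then take as test object the completely soft group $(E,\{*\})_S$ with $E(*)=S$, let $g:S\hookrightarrow G$ be the inclusion, and define $q_1(*)=a_1$, $q_2(*)=a_2$. The equality $g(E(*))=S=F(a_i)$ shows that both $\alpha_i=(g,q_i)$ are soft group homomorphisms into $(F,A)_G$; they share the same first coordinate $g$ and satisfy $p\circ q_1=p\circ q_2$ because $p(a_1)=p(a_2)$, so $(f,p)\circ\alpha_1=(f,p)\circ\alpha_2$. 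Monicity then gives $\alpha_1=\alpha_2$, whence $q_1=q_2$ and $a_1=a_2$.

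The main obstacle is the rigidity of the morphism condition $g(E(t))=F(q(t))$: unlike in $\mathbf{Grp}$ or $\mathbf{Set}$, one cannot map a test object into $(F,A)_G$ along arbitrary group homomorphisms, because the images are constrained to be exact values of $F$. Overcoming this is exactly where the standing hypothesis $F(x_0)=\mathrm{Ker}\,f$ and the correct ordering of the argument enter: the hypothesis licenses $\mathrm{Ker}\,f$ as an admissible image in the $f$-step, and proving injectivity of $f$ first is what upgrades $f(F(a_1))=f(F(a_2))$ to $F(a_1)=F(a_2)$, making the common inclusion $g$ a legitimate test map in the $p$-step. I would finally double-check that $L$ admits the two required surjections even when $\mathrm{Ker}\,f$ is small, such as $\mathbb{Z}/2$: taking $L$ free on the underlying set of $\mathrm{Ker}\,f$ and re-assigning a single generator (sending the generator indexed by $e$ to some $c\neq e$ instead of to $e$) yields a second surjection distinct from the canonical one, handling all nontrivial kernels uniformly.
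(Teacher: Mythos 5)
Your proof is correct, and while it follows the same overall strategy as the paper --- treat the two coordinates separately, exploit the hypothesis $F(x_0)=Kerf$ in the $f$-step, and then use injectivity of $f$ to upgrade $\hat{f}(F(a_1))=\hat{f}(F(a_2))$ to $F(a_1)=F(a_2)$ in the $p$-step --- your choice of test objects is genuinely different and, in the $f$-half, actually sounder. The paper tests monicity against the soft kernel $(F',A')$ using the two morphisms $(i_{Kerf},i_{A'})$ and $(f',i_{A'})$, where $f'$ is the trivial homomorphism on $Kerf$; but $(f',i_{A'})$ fails the defining condition $\hat{f'}\circ F'=F\circ i_{A'}$ whenever $Kerf\neq\{e\}$, since the left side is constantly $\{e\}$ while the right side is constantly $Kerf$ --- precisely the ``rigidity'' of the morphism condition that you flagged as the main obstacle. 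Your free-group test object $(E,\{*\})_L$ with two distinct surjections onto $Kerf$ satisfies that condition on the nose and so repairs this gap. In the $p$-half the paper instead uses $(F,A)_G$ itself as the test object, pairing $1_G$ with the two collapsing maps $p_1,p_2$ on $A$; your singleton-parameter object $(E,\{*\})_S$ with the inclusion $S\hookrightarrow G$ and the two constant maps $q_i(*)=a_i$ is an equivalent but leaner version of the same idea, and both versions rest on the same key fact $F(a_1)=F(a_2)$. In short, your argument verifies the soft-homomorphism condition exactly where the category $SGp$ differs from a naive product of $Gp$ and $Set$, which is the point the paper's own $f$-step glosses over.
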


\begin{proof}

\[ \begin{tikzcd}
A' \arrow{r}{F'} \arrow[swap]{d}{i_{A'}} & P(Kerf) 
\arrow{d}{\hat {i_{Kerf}}} \\%
A \arrow{r}{F} \arrow[swap]{d}{p}& P(G)
\arrow{d}{\hat{f}}\\%
B \arrow{r}{H} & P(K)
\end{tikzcd}
~~~~~~~~~~~~~~~~~~~~~~~~~~~
 \begin{tikzcd}
Kerf \arrow[swap]{d}{i_{Kerf}}\\%
G \arrow[swap]{d}{f}\\%
K 
\end{tikzcd}
\]
Suppose that $f$ is not injective. Thus we have $Kerf \neq \{e\}$. When $(f,p)\circ (i_{Kerf},i_{A'})=(f,p)\circ (f',i_{A'})$, we get  $(i_{Kerf},i_{A'}) \neq (f',i_{A'})$, where $f': Kerf \rightarrow G, f'(x)=e$. Hence the map $(f,p)$ is not monic. 

Now $p$ is not injective. Then we have two different $x,y \in A$ such that $p(x)=p(y)$.

\[ \begin{tikzcd}
A \arrow{r}{F} \arrow[swap]{d}{p_1,p_2} & P(G) 
\arrow{d}{\hat{1_G}} \\%
A \arrow{r}{F} \arrow[swap]{d}{p}& P(G)
\arrow{d}{\hat{f}}\\%
B \arrow{r}{H} & P(K)
\end{tikzcd}
~~~~~~~~~~~~~~~~~~~~~~~~~~~
 \begin{tikzcd}
G \arrow[swap]{d}{1_G}\\%
G \arrow[swap]{d}{f}\\%
K 
\end{tikzcd}
\]
If we define the maps $p_1,p_2:A \rightarrow A$ as
$$p_1(a)=
\begin{cases}
		a,  &  a \in A-\{x,y\} \\
		x,  &  a \notin A-\{x,y\}
\end{cases}
~~~\textrm{and}~~~~~
p_2(a)=
\begin{cases}
		a,  &  a \in A-\{x,y\} \\
		y,  &  a \notin A-\{x,y\}
\end{cases}
$$
then we have $p \circ p_1=p \circ p_2$.
Now we want to show that the map $(1_G,p_1): (F,A) \rightarrow (F,A)$ is a soft group homomorphism. In what follows, we need to prove the diagram 
\[ \begin{tikzcd}
A \arrow{r}{F} \arrow[swap]{d}{p_1,p_2} & P(G) 
\arrow{d}{\hat{1_G}} \\%
A \arrow{r}{F} & P(G)
\end{tikzcd}
~~~~~~~~\begin{tikzcd}
G\arrow[swap]{d}{1_G}\\%
G
\end{tikzcd}
\] 
is commutative. Suppose $a \in A-\{x,y\}$. Then we get 
$$(F \circ p_1)(a)=F(a)=\hat {1_G}(F(a))=(\hat {1_G} \circ F)(a).$$ 

Due the fact that $\hat{f}\circ F=H \circ p$ and $f$ is injective, we conclude  
$\hat{f}(F(x))=H(p(x))=H(p(y))=\hat{f}(F(y))$
and so 
\begin{equation}
    F(x)=F(y).
\end{equation}
Now we first take $a=x$. For any $x \in A$ we write $F(p_1(x))=F(x)=\hat {1_G}(F(x))$ and so $(F\circ p_1)(x)=(\hat {1_G}\circ F)(x)$. Secondly, if $a=y$, then we get $F(p_1(y))=F(x)=F(y)=\hat {1_G}(F(y))$ and again $(F\circ p_1)(y)=(\hat {1_G} \circ F)(y)$. Therefore $(1_G,p_1)$ is a soft group homomorphism. 

In the similar way, we can prove that the map $(1_G,p_2)$ is a soft group homomorphism. Eventually, we have
\begin{align*}
(f,p)\circ (1_G, p_1)&=(f\circ 1_G, p \circ p_1) \\
&=(f\circ 1_G, p \circ p_2) \\
&=(f,p)\circ (1_G, p_2).
\end{align*}
However, $(f,p)$ is not monic since $(1_G \circ p_1)\neq(1_G \circ p_2)$. This completes the proof.
\end{proof}

\begin{thm}
Let $(f,p): (F,A)_G \rightarrow (H,B)_K$ be a morphism in the soft group category $SGp$. If both $f$ and $p$ are injective, then the morphism $(f,p)$ is monic.
\end{thm}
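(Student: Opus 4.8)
The plan is to establish the monic property straight from its definition, reducing it to the left-cancellability of $f$ and $p$ in their respective ambient categories. Recall that $(f,p)$ is monic in $SGp$ precisely when, for every soft group $(E,C)_L$ and every pair of soft group homomorphisms $(g_1,q_1),(g_2,q_2): (E,C)_L \rightarrow (F,A)_G$, the equality $(f,p)\circ(g_1,q_1) = (f,p)\circ(g_2,q_2)$ forces $(g_1,q_1)=(g_2,q_2)$.

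First I would fix such a pair and write out the two composites componentwise. Since the composition in $SGp$ is computed slot by slot, namely $(f,p)\circ(g_i,q_i)=(f\circ g_i,\, p\circ q_i)$ for $i=1,2$, the hypothesis becomes the single morphism equality $(f\circ g_1,\, p\circ q_1)=(f\circ g_2,\, p\circ q_2)$. Invoking Definition \ref{equality}, this equality of soft group homomorphisms splits into the two component equalities $f\circ g_1 = f\circ g_2$ and $p\circ q_1 = p\circ q_2$.

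Next I would cancel $f$ and $p$ on the left in each slot. Because $f$ is an injective group homomorphism, it is left-cancellable among group homomorphisms, so $f\circ g_1 = f\circ g_2$ yields $g_1=g_2$; likewise, because $p$ is an injective function, it is left-cancellable among maps, so $p\circ q_1=p\circ q_2$ yields $q_1=q_2$. Applying Definition \ref{equality} once more in the reverse direction, the pair of equalities $g_1=g_2$ and $q_1=q_2$ gives $(g_1,q_1)=(g_2,q_2)$, which is exactly what monicity demands.

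I do not anticipate a genuine obstacle here: the argument is essentially the remark that a morphism whose two components are individually monic (in $Grp$ and in $Set$, respectively) is monic in the product-like structure of $SGp$. The only point deserving care is to confirm that injectivity really does imply left-cancellability in each slot — for $p$ this is the standard fact that the monomorphisms of $Set$ are exactly the injections, and for $f$ it is the corresponding fact in $Grp$ — and to apply the componentwise descriptions of composition and of equality (Definition \ref{equality}) consistently in both directions.
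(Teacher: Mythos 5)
Your proposal is correct and follows essentially the same route as the paper's own proof: compose componentwise, split the equality via Definition \ref{equality} into $f\circ g_1=f\circ g_2$ and $p\circ q_1=p\circ q_2$, and cancel using the injectivity of $f$ in $Gp$ and of $p$ in $Set$. No gaps.
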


\begin{proof}
Assume that $f$ and $p$ are injective. Let $(f_1,p_1),~(f_2,p_2) : (L,C)_M \rightarrow (F,A)_G$ be any two morphisms in $SGp$ such that $(f,p) \circ (f_1,p_1)=(f,p) \circ(f_2,p_2)$. Thus it can been easily seen that $(f \circ f_1,p \circ p_1) = (f \circ f_2,p \circ p_2)$. Then we get $f \circ f_1=f \circ f_2$ and $p \circ p_1 = p \circ p_2$. Since both $f$ and $p$ are one-to-one, we get $f_1 = f_2$ in $Gp$ and $p_1 = p_2$ in $Set$, where $Gp$ and $Set$ represent group category and set  category, respectively. Therefore the morphism $(f,p)$ is monic in $SGp$.
\end{proof}

\begin{thm}
If a soft group morphism $(f,p)$ in the $SGp$ is split monic, then both $f$ anf $p$ is injective.
\end{thm}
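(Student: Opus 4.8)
The plan is to unwind the definition of split monic and leverage the elementary fact that a morphism admitting a left inverse must be injective in each component. Recall that $(f,p)\colon (F,A)_G \to (H,B)_K$ being split monic means there exists a soft group homomorphism $(g,q)\colon (H,B)_K \to (F,A)_G$ such that $(g,q)\circ(f,p) = (1_G,1_A)$, the unit morphism on $(F,A)_G$. The first step is to rewrite this composite using the componentwise composition rule established implicitly in the earlier theorems, namely $(g,q)\circ(f,p) = (g\circ f,\, q\circ p)$. By Definition~\ref{equality}, equality of the two soft group homomorphisms $(g\circ f, q\circ p)$ and $(1_G,1_A)$ is equivalent to the two separate equations $g\circ f = 1_G$ and $q\circ p = 1_A$.

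Once these two equations are in hand, the argument splits into two entirely parallel pieces. First I would observe that $g\circ f = 1_G$ exhibits $g$ as a left inverse for the group homomorphism $f\colon G\to K$; since the identity map $1_G$ is injective and any map with a left inverse is injective (if $f(a)=f(b)$ then $a = g(f(a)) = g(f(b)) = b$), it follows immediately that $f$ is injective. Second, the equation $q\circ p = 1_A$ exhibits $q$ as a left inverse for the function $p\colon A\to B$, and the identical one-line argument in the category of sets shows that $p$ is injective. Combining the two conclusions gives that both $f$ and $p$ are injective, which is exactly the claim.

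The conceptual heart of the result is simply that a split monic is a monic, together with the fact that monics in $SGp$ were already characterized by injectivity of both components in the preceding theorems; so at the level of ideas there is essentially no obstacle. The only point that genuinely requires care is the justification that composition of soft group homomorphisms really is computed componentwise, i.e.\ that the splitting $(g,q)$ satisfies $(g,q)\circ(f,p)=(g\circ f,\,q\circ p)$, and that this composite is well-defined as a soft group homomorphism by Theorem~\ref{composition}. I would state that componentwise rule explicitly at the outset so that the passage from $(g,q)\circ(f,p)=(1_G,1_A)$ to the pair of scalar equations is transparent. After that, the two left-inverse arguments are routine and symmetric, so I would present one in full and note that the other is identical.
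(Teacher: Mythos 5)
Your proposal is correct and follows essentially the same route as the paper: extract the splitting $(g,q)$, decompose the equation $(g,q)\circ(f,p)=(1_G,1_A)$ componentwise via Definition~\ref{equality} into $g\circ f=1_G$ and $q\circ p=1_A$, and conclude injectivity of $f$ and $p$ from the existence of left inverses. The only cosmetic difference is that you spell out the elementary left-inverse-implies-injective argument, whereas the paper phrases it as $f$ and $p$ being split monic in $Gp$ and $Set$ respectively.
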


\begin{proof}
Let $(F,A)_G$ and $(H,B)_K$ be any two soft group and let $(f,p): (F,A)_G \rightarrow (H,B)_K$ be a soft split monic morphism. Since $(f,p)$ is a split monic morphism, then there exists $(g,q): (H,B)_K \rightarrow (F,A)_G$ such that $(g,q)\circ (f,p)=1_{(F,A)_G}=(1_G,1_A)$. Because of Definition \ref{equality}, we have $g \circ f=1_G$ and $q \circ p= 1_A$. It follows that $f$ is a split monic morphism in the group category $Gp$ and $p$ is a split monic morphism in the set category $Set$. Thus, both $f$ anf $p$ is injective.
\end{proof}

\begin{thm}
Let $(f,p): (F,A)_G \rightarrow (H,B)_K$ be a morphism in the soft group category $SGp$. If $f$ is an epimorphism in $Gp$ and $p$ is an onto function in $Set$, then the morphism $(f,p)$ is epic.
\end{thm}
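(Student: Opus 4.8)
The plan is to prove that $(f,p)$ is epic by verifying the right-cancellation property directly, dualizing the earlier criterion that component-wise injectivity of $f$ and $p$ yields a monic morphism. So I would start from an arbitrary pair of parallel morphisms $(g_1,q_1),(g_2,q_2): (H,B)_K \rightarrow (L,C)_M$ in $SGp$ and assume $(g_1,q_1)\circ(f,p)=(g_2,q_2)\circ(f,p)$; the goal is then to deduce $(g_1,q_1)=(g_2,q_2)$.

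First I would unwind the composition component-wise. Since composition in $SGp$ is defined by composing the two component maps separately (as already used in the monic proof, where $(f,p)\circ(1_G,p_1)=(f\circ 1_G,\,p\circ p_1)$), the hypothesis reads $(g_1\circ f,\,q_1\circ p)=(g_2\circ f,\,q_2\circ p)$. By Definition \ref{equality}, equality of soft group homomorphisms is tested component-wise, so this single equation splits into the two equations $g_1\circ f=g_2\circ f$ in $Gp$ and $q_1\circ p=q_2\circ p$ in $Set$.

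Next I would cancel each component separately. Because $f$ is an epimorphism in the group category $Gp$, it is right-cancellable there, so $g_1\circ f=g_2\circ f$ forces $g_1=g_2$. For the parameter maps I would invoke the standard fact that in $Set$ the epimorphisms are precisely the surjections; hence the onto map $p$ is right-cancellable, and $q_1\circ p=q_2\circ p$ gives $q_1=q_2$. Recombining these two conclusions through Definition \ref{equality} yields $(g_1,q_1)=(g_2,q_2)$, which is exactly the statement that $(f,p)$ is epic in $SGp$.

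I do not expect a genuine obstacle in this argument, since it is the formal dual of the monic criterion already established; the only point deserving a word of care is the justification that surjectivity of $p$ is the correct notion of epi in $Set$, so that the parameter component can legitimately be cancelled. This is precisely where the hypothesis that $p$ is onto is used, and it plays the same role for the parameter maps that ``$f$ is an epimorphism'' plays for the group component.
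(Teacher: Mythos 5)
Your proposal is correct and follows essentially the same route as the paper's own proof: split the composite equation componentwise via Definition \ref{equality}, cancel $f$ as an epimorphism in $Gp$ and $p$ as a surjection (hence epimorphism) in $Set$, and recombine. The only difference is that you explicitly justify why surjectivity of $p$ permits cancellation in $Set$, which the paper leaves implicit.
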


\begin{proof}
Assume that $(g_1,q_1),~(g_2,q_2) : (H,B)_K \rightarrow (T,D)_N$ be any two morphisms in $SGp$ such that $(g_1,q_1)\circ (f,p) =(g_2,q_2)\circ (f,p)$. Thus we deduce that $(g_1 \circ f,q_1 \circ p) = (g_2 \circ f,q_2 \circ p)$. From this, we get $g_1 \circ f= g_2 \circ f$ and $q_1 \circ p = q_2 \circ p$. We obtain $g_1 = g_2$ in $Gp$ and $q_1 = q_2$ in $Set$ due to the facts that $f$ is an epimorphism and $q$ is onto. Hence, the morphism $(f,p)$ is epic in $SGp$.
\end{proof}

\section{Properties of the Soft Group Category}
In this section,  we will show that the SGp category is a symmetric monoidal category by proving the existence of the universal properties such as the final object and the product.

\begin{thm}
Let $\{e\}$ be the trivial group  and let $A=\{a\}$ be a singleton. Let $(F,\{a\})_{\{e\}}$ be a soft group such that $F: \{a\} \rightarrow P(\{e\}),~~F(a)=\{e\}$. Then $(F,\{a\})_{\{e\}}$ is a final object in the soft group category $SGp$.
\end{thm}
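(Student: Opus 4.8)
The plan is to verify the universal property of a final object: I need to show that for every soft group $(H,B)_K$ in $SGp$, there exists a unique soft group homomorphism $(f,p):(H,B)_K \rightarrow (F,\{a\})_{\{e\}}$. First I would construct the candidate morphism. Since the target group is the trivial group $\{e\}$, the only group homomorphism $f:K \rightarrow \{e\}$ is the constant map $f(k)=e$ for all $k \in K$, and this is forced. Similarly, since the target parameter set $\{a\}$ is a singleton, the only function $p:B \rightarrow \{a\}$ is the constant map $p(b)=a$ for all $b \in B$, again forced. So both components exist and are uniquely determined purely by the fact that $\{e\}$ is terminal in $Gp$ and $\{a\}$ is terminal in $Set$.

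Next I would check that this forced pair $(f,p)$ is actually a soft group homomorphism, i.e. that the defining square in diagram~(\ref{sgh}) commutes: $\hat{f}\circ H = F \circ p$. For any $b \in B$, the left-hand side is $\hat{f}(H(b))$; since $H(b)$ is a subgroup of $K$ and $f$ collapses everything to $e$, we get $\hat{f}(H(b))=\{e\}$. The right-hand side is $F(p(b))=F(a)=\{e\}$ by the definition of $F$. Hence both sides equal $\{e\}$ and the square commutes, so $(f,p)$ is indeed a morphism in $SGp$.

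Finally I would assemble existence and uniqueness into the conclusion. Existence is the construction just given; uniqueness follows because any competing morphism $(f',p'):(H,B)_K \rightarrow (F,\{a\})_{\{e\}}$ must have $f'$ a group homomorphism into $\{e\}$ and $p'$ a function into $\{a\}$, and by the terminality remarks above these are forced to coincide with $f$ and $p$ respectively; by Definition~\ref{equality}, $f=f'$ and $p=p'$ give $(f,p)=(f',p')$. Therefore $(F,\{a\})_{\{e\}}$ receives exactly one morphism from each object and is a final object in $SGp$.

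I do not expect a serious obstacle here, since the argument is essentially the product of the terminal-object facts in $Gp$ and in $Set$. The only point requiring genuine attention is the commutativity verification in the second paragraph: one must confirm that the soft-homomorphism condition is compatible with the two componentwise-forced maps, which hinges precisely on having chosen $F(a)=\{e\}$ so that the right-hand side of the square lands in the trivial subgroup. Had $F(a)$ been any nontrivial subgroup, the square could fail, so this is where the specific hypotheses on $(F,\{a\})_{\{e\}}$ are used.
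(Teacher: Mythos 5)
Your proposal is correct and follows essentially the same route as the paper: both construct the forced morphism $(f,p)$ with $f(k)=e$ and $p(b)=a$ and appeal to the terminality of $\{e\}$ in $Gp$ and $\{a\}$ in $Set$ for uniqueness. Your version is actually more complete, since you explicitly verify the commutativity $\hat{f}\circ H=F\circ p$ and spell out the uniqueness argument, both of which the paper only asserts.
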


\begin{proof}
Let $(H,B)_K$ be an object in the soft group category $SGp$. Then the following diagram
\[ \begin{tikzcd}
B \arrow{r}{H} \arrow[swap]{d}{p} & P(K) 
\arrow{d}{\hat{f}} \\ %
\{a\} \arrow{r}{F} & P(\{e\})
\end{tikzcd}
\]
is commutative, where $p(b)=a$ for every $b \in B$ and
\[ \begin{tikzcd}
K \arrow[swap]{d}{f}\\ %
\{e\} 
\end{tikzcd}
\]
states the group homomorphism defined by $f(k)=e$ for each $k \in K$. Thus $(f,p)$ is the unique soft group homomorphism that can be defined from $(H,B)_K$ to $(F,\{a\})_{\{e\}} $. As a result, any soft group constructed by a parameter set with single-element and a group $\{e\} $ with one element is a final object in the soft group category $SGp$.
\end{proof}

\begin{prop}
Let $(F_1,A_1)_{G_1}$ and $(F_2,A_2)_{G_2}$ be soft groups and let $(U,A_1 \times A_2)_{G_1 \times G_2}=(F_1,A_1)_{G_1} \hat{\times} (F_2,A_2)_{G_2}$ be the soft product of them. For $i=1,2$ let $\Pi_i : A_1 \times A_2 \rightarrow A_i$ be $i$-th projection function on the parameter sets and let $p_i : G_1 \times G_2 \rightarrow G_i$ be $i$-th projection homomorphism on groups. Then the map $(p_i,\Pi_i): (U,A_1 \times A_2)_{G_1 \times G_2} \rightarrow (F_i,G_i)_{G_i}$ for each $i=1,2$ is a soft group homomorphism.
\end{prop}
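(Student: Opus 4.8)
The plan is to verify directly the defining condition of a soft group homomorphism (the commutative square in (\ref{sgh})): for the pair $(p_i,\Pi_i)$ to be a morphism from $(U,A_1\times A_2)_{G_1\times G_2}$ to the $i$-th factor $(F_i,A_i)_{G_i}$, I must check two things, namely that $p_i$ is a group homomorphism and that the power-set square $\hat{p_i}\circ U = F_i\circ \Pi_i$ commutes. (I read the codomain in the statement as $(F_i,A_i)_{G_i}$.)

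First I would dispose of the structural parts, which require no work. The map $p_i:G_1\times G_2\to G_i$ is the canonical projection and is therefore a group homomorphism, while $\Pi_i:A_1\times A_2\to A_i$ is an ordinary projection of sets; both are supplied by the hypothesis, so the only remaining content is the commutativity of the square.

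The substance of the argument is that commutativity. I would fix an arbitrary parameter $(x_1,x_2)\in A_1\times A_2$ and evaluate each composite on it. Using the definition of the soft product, $U(x_1,x_2)=F_1(x_1)\times F_2(x_2)$, so on one side $(\hat{p_i}\circ U)(x_1,x_2)=p_i\bigl(F_1(x_1)\times F_2(x_2)\bigr)$, while on the other side $(F_i\circ \Pi_i)(x_1,x_2)=F_i(x_i)$. Hence the entire claim collapses to the single set-theoretic identity $p_i\bigl(F_1(x_1)\times F_2(x_2)\bigr)=F_i(x_i)$.

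This last identity is the one point that genuinely uses the group structure, and it is the step I would flag as the crux. For $i=1$ it asserts that the image of the subgroup $F_1(x_1)\times F_2(x_2)\le G_1\times G_2$ under the first projection equals $F_1(x_1)$. The inclusion $\subseteq$ is immediate from the definition of the projection; for the reverse inclusion $\supseteq$ one uses that $F_2(x_2)$, being a subgroup of $G_2$, contains the identity $e_2$, so each $g\in F_1(x_1)$ is the image of $(g,e_2)\in F_1(x_1)\times F_2(x_2)$. The symmetric argument settles $i=2$. With this identity established for every $(x_1,x_2)$, the square commutes, i.e. $\hat{p_i}\circ U=F_i\circ\Pi_i$, and therefore $(p_i,\Pi_i)$ is a soft group homomorphism for each $i=1,2$.
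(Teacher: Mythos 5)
Your proof is correct and follows the same route as the paper: both reduce the claim to the identity $\hat{p_i}(F_1(a_1)\times F_2(a_2))=F_i(a_i)$ for each parameter pair. The only difference is that the paper asserts this equality without comment, whereas you justify the reverse inclusion by noting that the other factor $F_j(x_j)$ contains the identity because it is a subgroup — a worthwhile detail the paper omits.
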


\begin{proof}
To prove that $(p_i,\Pi_i)$ is a soft group homomorphism for each $i=1,2$ we need to show that the following diagram is commutative:
\[ \begin{tikzcd}
A_1 \times A_2 \arrow{r}{U} \arrow[swap]{d}{\Pi_i} & P(G_1 \times G_2) 
\arrow{d}{\hat{p_i}} \\%
A_i \arrow{r}{F_i} & P(G_i)
\end{tikzcd}
~~~~~~\begin{tikzcd}
G_1 \times G_2 \arrow[swap]{d}{p_i}\\%
G_i 
\end{tikzcd}
\] 

For this purpose, it is sufficient to show that the equality $\hat{p_i}\circ U= F_i \circ \Pi_i$ is satisfied. For any pair $(a_1,a_2)$, we get 
\begin{align*}
    (\hat{p_i}\circ U)(a_1,a_2)&=\hat{p_i}(F_1(a_1)\times F_2(a_2))\\
    &=F_i(a_i)\\
    &=F_i(\Pi_i(a_1,a_2))\\
    &=(F_i \circ \Pi_i)(a_1,a_2),
\end{align*}
and so the proof is completed.
\end{proof}

We can generalize the above proposition as follows:

\begin{cor}
Let $(F_1,A_1)_{G_1}, (F_2,A_2)_{G_2}, \cdots (F_n,A_n)_{G_n}$ be soft groups and let $(U,A_1 \times A_2 \times \cdots \times A_n)_{G_1 \times G_2\cdots \times G_n}=(F_1,A_1)_{G_1} \hat{\times} (F_2,A_2)_{G_2}\hat{\times} \cdots \hat{\times}(F_n,A_n)_{G_n}$ be the product of these soft groups in the sense of Definition \ref{gproduct}. For each $i=1,2, \cdots, n$, let $\Pi_i : A_1 \times \cdots \times A_n \rightarrow A_i$ be $i$-th projection function on the parameter sets and let $p_i : G_1 \times \cdots \times G_n \rightarrow G_i$ be $i$-th projection homomorphism on groups. Then the map $(p_i,\Pi_i): (U,A_1 \times \cdots \times A_n)_{G_1 \times \cdots \times G_2} \rightarrow (F_i,A_i)_{G_i}$ for each $i=1,2,\cdots, n$ is a soft group homomorphism.
\end{cor}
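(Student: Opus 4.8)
The plan is to follow the proof of the preceding Proposition essentially verbatim, replacing the two-factor product by the $n$-factor product. First I would display the relevant commuting square together with the group-level projection, so that establishing the identity $\hat{p_i}\circ U = F_i \circ \Pi_i$ is exactly what the defining diagram (\ref{sgh}) demands for $(p_i,\Pi_i)$ to be a soft group homomorphism from $(U,A_1\times\cdots\times A_n)_{G_1\times\cdots\times G_n}$ to $(F_i,A_i)_{G_i}$. This reduces the whole claim to the single functional equality on the power-set level.

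Next I would evaluate both composites on an arbitrary tuple $(a_1,\ldots,a_n)\in A_1\times\cdots\times A_n$. Using the definition of the soft product from Definition \ref{gproduct}, the left-hand side unfolds as $(\hat{p_i}\circ U)(a_1,\ldots,a_n)=\hat{p_i}\bigl(F_1(a_1)\times\cdots\times F_n(a_n)\bigr)$, and the one fact I actually need is that the $i$-th projection of a Cartesian product of subgroups returns its $i$-th factor, so this equals $F_i(a_i)$. The right-hand side is immediate, since $(F_i\circ\Pi_i)(a_1,\ldots,a_n)=F_i\bigl(\Pi_i(a_1,\ldots,a_n)\bigr)=F_i(a_i)$ by definition of $\Pi_i$. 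As the tuple was arbitrary, the two maps agree and the square commutes, which finishes the verification for each $i\in\{1,\ldots,n\}$.

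The argument carries no genuine obstacle; it is a purely notational generalization of the two-factor computation already given. The only point worth isolating is the behavior of $\hat{p_i}$ on a product set: since $\hat{p_i}$ is by convention the map on power sets induced by the group projection $p_i$, confirming that it sends $F_1(a_1)\times\cdots\times F_n(a_n)$ to $F_i(a_i)$ is precisely what makes the chain of equalities go through. Everything else is bookkeeping over the index $i$, so I would keep the write-up short and simply record the displayed computation, exactly as in the Proposition.
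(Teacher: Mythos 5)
Your proposal is correct and is exactly what the paper intends: the paper states this corollary without proof as an immediate generalization of the preceding two-factor Proposition, and your argument is precisely that Proposition's computation $(\hat{p_i}\circ U)(a_1,\ldots,a_n)=\hat{p_i}\bigl(F_1(a_1)\times\cdots\times F_n(a_n)\bigr)=F_i(a_i)=(F_i\circ\Pi_i)(a_1,\ldots,a_n)$ carried over to $n$ factors. No issues.
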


\begin{thm}
Let $(F_1,A_1)_{G_1}$ and $(F_2,A_2)_{G_2}$ be any two objects in $SGp$ and let $(U,A_1 \times A_2)_{G_1 \times G_2}=(F_1,A_1)_{G_1} \hat{\times} F_2,A_2)_{G_2}$ be the soft product of $(F_1,A_1)_{G_1}$ and $(F_2,A_2)_{G_2}$. Then the product of $(F_1,A_1)_{G_1}$ and $(F_2,A_2)_{G_2}$ in $SGp$ is no other than $((U,A_1 \times A_2)_{G_1 \times G_2}, (p_i,\Pi_i))$, where for each $i=1,2$,~ $\Pi_i : A_1 \times A_2 \rightarrow A_i$ is $i$-th projection function on the parameter sets and $p_i : G_1 \times G_2 \rightarrow G_i$ is $i$-th projection homomorphism on groups. 
\end{thm}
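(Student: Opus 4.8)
The plan is to verify the universal property of the categorical product directly. I would begin from an arbitrary test cone: an object $(T,C)_N$ in $SGp$ equipped with two morphisms $(g_1,q_1)\colon (T,C)_N \to (F_1,A_1)_{G_1}$ and $(g_2,q_2)\colon (T,C)_N \to (F_2,A_2)_{G_2}$, and look for a mediating morphism $(h,r)\colon (T,C)_N \to (U,A_1\times A_2)_{G_1\times G_2}$ with $(p_i,\Pi_i)\circ(h,r)=(g_i,q_i)$ for $i=1,2$. The requirements $p_i\circ h=g_i$ and $\Pi_i\circ r=q_i$ already pin down the underlying data: since $G_1\times G_2$ is the product in the group category $Gp$ and $A_1\times A_2$ is the product in $Set$, their universal properties force $h\colon N\to G_1\times G_2$, $h(n)=(g_1(n),g_2(n))$, and $r\colon C\to A_1\times A_2$, $r(c)=(q_1(c),q_2(c))$, to be the only candidates. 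By Definition~\ref{equality} a morphism of $SGp$ is determined by its two underlying maps, so this simultaneously supplies the mediating data and settles uniqueness; the theorem then reduces to a single question internal to $SGp$.

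That question is whether the forced pair $(h,r)$ is genuinely a morphism of $SGp$, i.e.\ whether $\hat h\circ T=U\circ r$. Concretely this asks, for every $c\in C$,
\[
h\bigl(T(c)\bigr)\;=\;F_1(q_1(c))\times F_2(q_2(c)).
\]
One inclusion is immediate: since $(g_i,q_i)$ are soft group homomorphisms we have $g_i(T(c))=F_i(q_i(c))$, and from $h=(g_1,g_2)$ it follows that $h(T(c))\subseteq F_1(q_1(c))\times F_2(q_2(c))$. Granting the equality, the factorization identities $p_i\circ h=g_i$ and $\Pi_i\circ r=q_i$ hold by construction, so nothing further is required.

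The hard part, which I expect to be the genuine obstacle, is the reverse inclusion $F_1(q_1(c))\times F_2(q_2(c))\subseteq h(T(c))$. This amounts to showing that $t\mapsto(g_1(t),g_2(t))$ maps $T(c)$ \emph{onto} the full direct product of the two subgroups, a joint surjectivity that does not follow formally from the two separate surjections $g_i(T(c))=F_i(q_i(c))$. For instance the diagonal $\mathbb Z\xrightarrow{(\mathrm{id},\mathrm{id})}\mathbb Z\times\mathbb Z$ with $T(c)=\mathbb Z$ gives $h(T(c))$ equal to the diagonal subgroup, a proper subgroup of $\mathbb Z\times\mathbb Z=F_1(q_1(c))\times F_2(q_2(c))$. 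To push the proof through I would therefore either isolate an extra hypothesis on the cone forcing this joint surjectivity, or interpret the morphism law of $SGp$ as the soft-subset containment $\hat h\circ T\sqsubseteq U\circ r$ rather than strict equality, in which case the inclusion already established suffices. Deciding which convention the intended framework adopts is the crux on which the argument turns.
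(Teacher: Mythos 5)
Your setup coincides with the paper's: the paper takes a test object $(H,B)_K$ with morphisms $(g_i,q_i)$, forms $\gamma=(g_1,g_2)$ and $\theta=(q_1,q_2)$, verifies that $(\gamma,\theta)$ is a morphism, checks the factorization identities, and argues uniqueness (the paper attributes uniqueness to ``each $p_i$ is a group monomorphism,'' which is garbled --- the projections are not monic --- but the intended argument is the joint injectivity you derive correctly from the universal properties of $G_1\times G_2$ in $Gp$ and $A_1\times A_2$ in $Set$). So up to the final verification your proposal and the paper's proof are the same.

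The step at which you stop is exactly the step at which the paper's proof breaks down, and your diagnosis is right. The paper labels the right-hand vertical arrow $\hat{\gamma}=\hat{g_1}\times\hat{g_2}$ and computes $\hat{\gamma}(H(b))=\hat{g_1}(H(b))\times\hat{g_2}(H(b))=F_1(q_1(b))\times F_2(q_2(b))$. But the definition of soft group homomorphism requires $\hat{\gamma}$ to be the direct-image map of $\gamma=(g_1,g_2)$, and $\gamma(H(b))=\{(g_1(x),g_2(x)) : x\in H(b)\}$ is in general a proper subgroup of $g_1(H(b))\times g_2(H(b))$. Your diagonal example realizes the failure concretely: take $K=G_1=G_2=\mathbb{Z}$, singleton parameter sets, all soft groups completely soft, and $g_1=g_2=\mathrm{id}$; then the forced mediating map sends $H(b)=\mathbb{Z}$ to the diagonal subgroup of $\mathbb{Z}\times\mathbb{Z}$, not to $U(\theta(b))=\mathbb{Z}\times\mathbb{Z}$, so no mediating morphism exists at all. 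Hence the theorem fails under the strict-equality reading of the morphism condition (Definition \ref{equality} together with the commutativity requirement), and could only be rescued by the soft-subset convention $\hat{\gamma}\circ H\sqsubseteq U\circ\theta$ or an extra hypothesis, as you suggest. You have not produced a proof, but that is because you have correctly located a genuine gap that the paper's own proof silently steps over.
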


\begin{proof}

Assume that ${(H,B)}_K$ is an object and each $(g_i,q_i): (H,B)_K \rightarrow (F_i,A_i)_{G_i}$,$i=1,2$ is a  morphism in $SGp$.   
To prove that uniqueness of the product of $(F_1,A_1)_{G_1}$ and $(F_2,A_2)_{G_2}$, we need to show that there exists a unique soft group homomorphism $(\gamma,\theta): (H,B)_K \rightarrow (U,A_1 \times A_2)_{G_1 \times G_2}$, where we define $\theta=(q_1,q_2)$ and $\gamma=(g_1, g_2)$. For this reason, we will verify that the diagram below is commutative:

\[ \begin{tikzcd}
B \arrow{r}{H} \arrow[swap]{d}{\theta=(q_1,q_2)} & P(K)
\arrow{d}{\hat{\gamma}=\hat{g_1} \times \hat{g_2}} \\%
A_1 \times A_2 \arrow{r}{U} & P(G_1 \times G_2) 
\end{tikzcd}
~~~~~~\begin{tikzcd}
K \arrow[swap]{d}{\gamma}\\%
G_1 \times G_2
\end{tikzcd}
\]

For any element $b$ of $B$, we have
\begin{align*}
    (\hat{\gamma}\circ H)(b)&=\hat{\gamma}(H(b))\\
    &=\hat{g_1}(H(b))\times \hat{g_2}(H(b))\\
    &=F_1(q_1(b))\times F_2(q_2(b))\\
    &=U(q_1(b),q_2(b))\\
    &=(U\circ (q_1,q_2))(b)\\  
    &=(U\circ \theta)(b),
\end{align*}
so $(\gamma,\theta)$ is a soft group homomorphism. We illustrated the morphisms in the diagram below for the purpose of a better explanation of the subject.

\[
\begin{tikzpicture}
  \node (s) {$(H,B)_K$};
  \node (xy) [below=2 of s] {$(U,A_1 \times A_2)_{G_1 \times G_2}$};
  \node (x) [left=of xy] {$(F_1,A_1)_{G_1}$};
  \node (y) [right=of xy] {$(F_2,A_2)_{G_2}$};
  \draw[->] (s) to node [sloped, above] {$(g_2,q_2)$} (y);
  \draw[<-] (x) to node [sloped, above] {$(g_1,q_1)$} (s);
  \draw[->, dashed] (s) to node {$(\gamma,\theta)$} (xy);
  \draw[->] (xy) to node [below] {$(p_1, \Pi_1)$} (x);
  \draw[->] (xy) to node [below] {$(p_2, \Pi_2)$} (y);
\end{tikzpicture}
\]
Now we show  that for each $i=1,2$ the realtion $(p_i, \Pi) \circ (\gamma, \theta)=(g_i,q_i)$. For any $k\in K$, we have
\[
(p_i \circ \gamma)(k)=p_i ( \gamma(k))=p_i ( g_1(k), g_2(k))=g_i(k),
\]
and so $p_i \circ \gamma=g_i$. Similarly, we can prove the relation $\Pi_i \circ \theta=q_i$. 

Finally, let $(\gamma^{'},\theta^{'}): (H,B)_K \rightarrow (U,A_1 \times A_2)_{G_1 \times G_2}$ be another morphism satisfying the condition $(p_i, \Pi) \circ (\gamma^{'}, \theta^{'})=(g_i,q_i)$. Since 
$$(pi \circ \gamma^{'})(k)=g_i(k)=(pi \circ \gamma)(k)$$ 
for each $k \in K$, then we have $pi \circ \gamma^{'}=pi \circ \gamma$. We conclude that $ \gamma^{'}= \gamma$ due to the fact that each $p_i,~~i=1,2$ is a group monomorphism . In the similar manner, one can see $\theta^{'}=\theta$. Consequently, we obtain $(\gamma^{'},\theta^{'})=(\gamma,\theta)$, which means $(\gamma,\theta)$ is unique. Thus we complete the proof.
\end{proof}

\section{EXAMPLE}

In this section, we will give a soft group homomorphism with the soft kernel based on the structure of the hyperoctahedral group, which is a finite real reflection group. 

We assume that $[m,n]:=\{m, m+1, \cdots, n\}$ for any $m, n \in \mathbb{Z}$ such that $m \leq n$. Let $(W_n,R_n)$ be a Weyl group of type $B_n$, which is also called a \textit{hyperoctahedral group}, where $R_n=\{v_1, r_{1},\cdots, r_{n-1}\}$ is the canonical set of generators of $W_n$. Any element $w\in W_{n}$ acts as a signed permutation on the set $I_{n}=[ -n,n ] \backslash \{0\}$ such that $w(-i)=-w(i)$ for each $i \in I_{n}$. The group $W_n$ has a Dynkin diagram with respect to the set of generators $R_n=\{v_1,r_1,\cdots,r_{n-1}\}$ as follows:
\[
\begin{tikzpicture}
[
roundednode/.style={circle, draw=white!60, fill=white!5, very thick, minimum size=6mm},
roundnode/.style={circle, draw=black!50, fill=white!5, very thick, minimum size=6mm},
squarednode/.style={circle, draw=black!60, fill=white!5, very thick, text width=0.005mm, minimum size=6mm},
]
\node[roundnode, font=\tiny]      (maintopic){};
\node[roundednode]  [left=of maintopic]{$B_n$ :};
\node[roundnode]        (uppercircle)       [right=of maintopic]{};
\node[roundnode]      (rightsquare)       [right=of uppercircle]{};
\node[roundnode]        (lowercircle)       [right=of rightsquare]{};
\node[roundnode]        (zcircle)       [right=of lowercircle] {};
\node[node distance= 0.3cm, below = of maintopic]{$v_1$};
\node [label=$\frac{-1}{\sqrt{2}}$] (C) at (0.8,0) {};
\node[node distance= 0.3cm, below = of uppercircle]{$r_1$};
 \node[node distance= 0.3cm, below = of rightsquare]{$r_2$};
 \node[node distance= 0.3cm, below = of lowercircle]{$r_{n-2}$};
 \node[node distance= 0.3cm, below = of zcircle]{$r_{n-1}$};
\draw [-,double  distance=2pt,thick](maintopic) -- (uppercircle);
\draw[-] (uppercircle) -- (rightsquare);
\draw[densely dashed,-] (rightsquare)-- (lowercircle);
\draw[-] (lowercircle)-- (zcircle);
\draw[-](lowercircle)-- (zcircle);
\end{tikzpicture}\\
\]

The subgroup $W_{K}$ of $W_{n}$ generated by $K$ is said to be a \textit{standard parabolic subgroup} for any subset $K$ of $R_{n}$ and a subgroup of $W_{n}$ conjugate to $W_{K}$ for some $K \subset R_{n}$ is called  a \textit{parabolic subgroup}. Let $v_{i}:=r_{i-1}v_{i-1}r_{i-1}$ for each $2 \leq i \leq n$. It is well-known that $W_n=S_{n} \rtimes \mathcal{V}_{n} $, where $S_n$ is the symmetric group generated by $\{ r_1, \cdots, r_{n-1}\}$ and $\mathcal{V}_{n}$ is a normal subgroup of $W_n$ generated by reflections in $V_n=\{ v_1, \cdots, v_n \}$. That is, $W_{n}$ is a split group extension of $\mathcal{V}_{n}$ by $S_{n}$ and clearly the cardinality of the group $W_{n}$ is $2^{n} n!$. Note here that $r_i~~(i=1,\cdots,n-1)$ is identified with 
$$r_i(j)=
\begin{cases}
		i+1,  &  \textrm{if}~ j=i; \\
  	i,  &  \textrm{if}~ j = i+1;\\
		j,  &  \textrm{otherwise.}
\end{cases}
$$ and $v_i~~(i=1,\cdots,n)$ is defined by $$v_i(j)=
\begin{cases}
		j,  & \textrm{if} ~ j \neq i; \\
		-i,  & \textrm{if} ~ j=i.
\end{cases}
$$
The representation of $W_n$ is given by the following form (see \cite{br8}):
\begin{align*}
    W_n=&\langle  r_1,\cdots, r_{n-1}, v_1,\cdots, v_{n} : r_i^2=(r_ir_{i+1})^3=(r_ir_j)^2=e, |i-j|>1;\\ & v_{i}^2=(v_1r_1)^4=e, v_iv_j=v_jv_i, r_iv_ir_i=v_{i+1}, r_iv_j=v_jr_i,~j\neq i,i+1 \rangle
\end{align*}
where $e$ denotes the identity element of $W_n$.
For more detailed information about the Weyl group of type $B_n$, one can apply to \cite{br8}. 

A \textit{signed composition} of $n$ can be considered of as an expression of $n$ as an ordered sequence of nonzero integers. More precisely, a signed composition of $n$ is a finite sequence $A=(a_{1}, \cdots, a_{k})$ of nonzero integers satisfying 
 $\sum_{i=1}^{k}|a_{i}|=n$ \cite{br9}. Put $|A|=\sum_{i=1}^{k}|a_{i}|$. We will denote the set of all signed compositions of $n$ by $\mathcal{SC}(n)$. 

A \textit{bi-partition} of $n$ is a pair $\mu=({\mu}^{+}; {\mu}^{-})$ where ${\mu}^{+}$ and ${\mu}^{-}$ are partitions such that $|\mu|=|{\mu}^{+}|+|{\mu}^{-}|=n$. It is well-known from \cite{br9} that $\boldsymbol{\Lambda} : \mathcal{SC}(n) \rightarrow \mathcal{BP}(n),  \boldsymbol{\Lambda}(A)=(\boldsymbol{\Lambda}^{+}(A); \boldsymbol{\Lambda}^{-}(A))$ is a surjective map, where $\boldsymbol{\Lambda}^{+}(A)$ (resp. $\boldsymbol{\Lambda}^{-}(A)$) is rearrangement of the positive components (resp. absolute value of negative components) of $A$ in decreasing order. For $\mu=({\mu}^{+}; {\mu}^{-}) \in \mathcal{BP}(n)$, $\hat{\mu}:=\mu^{+} \sqcup -\mu^{-}$ is a unique signed composition obtained by concatenating the sequence of components of $\mu^{+}$ to that of $-\mu^{-}$ and $W_{\hat{\mu}}$ is a reflection subgroup of $W_n$ (see \cite{br9}).

We will denote by $\mathcal{BP}(n)$ the set of all bi-partitions of $n$. 
In \cite{br9}, Bonnaf\'e and Hohlweg assigned each signed composition of $n$ to a reflection subgroup of $W_{n}$ in the following way: The reflection subgroup $W_{A}$ of $W_{n}$ with respect to $A=(a_{1}, \cdots, a_{k})\in \mathcal{SC}(n)$ is generated by the reflection subset $R_{A}$, which is defined by
\begin{align*}
    R_{A}= & \{ r_{p}\in S_{n}~:~|a_{1}|+ \cdots + |a_{i-1}|+1 \leq p < |a_{1}|+ \cdots + |a_{i}|\} \\
           & \cup \{ {v_{|a_{1}|+ \cdots + |a_{j-1}|+1}} \in V_{n} \} ~|~ a_{j} > 0 \} \subset R_{n}^{'}
\end{align*}
where $R'_{n}=\{r_{1}\cdots r_{n-1}, v_{1}, v_{2}, \cdots, v_{n}\}.$

Let $\mathcal{P}(W_n)$ denote the power set of $W_n$. If we pick the parameter set as $\mathcal{BP}(n)$ and accordingly define the map $G: \mathcal{BP}(n) \rightarrow \mathcal{P}(W_n),~~G(\mu)=W_{\hat{\mu}}$, then the pair $(G, \mathcal{BP}(n))_{W_n}$ is a soft group.

Now we give another soft group example. If we take $\mathcal{SC}(n)$ as the parameter set and define the map $F$ as $F(A)=W_{\hat{\mu}}$, where we can write $\hat{\mu}:=\boldsymbol{\Lambda}^{+}(A) \sqcup -\boldsymbol{\Lambda}^{-}(A) \in \mathcal{SC}(n)$ for every $A \in \mathcal{SC}(n)$, then the pair $(F, \mathcal{SC}(n))_{W_n}$ is a soft group due to the fact that $W_{\hat{\mu}}$ is a subgroup of $W_n$.

The pair $(f, \boldsymbol{\Lambda}): (F, \mathcal{SC}(n))_{W_n} \rightarrow (G, \mathcal{BP}(n))_{W_n}$ is a soft group homomorphism with the soft kernel. Therefore, we say that the following diagram is commutative: 
\[ \begin{tikzcd}
\mathcal{SC}(n) \arrow{r}{F} \arrow[swap]{d}{\boldsymbol{\Lambda}} & \mathcal{P}(W_n) 
\arrow{d}{}{\hat{f}}\\%
\mathcal{BP}(n) \arrow{r}{G}  & \mathcal{P}(W_n)
\end{tikzcd}
\]
where $f~:~W_n \mapsto W_n$ is the trivial isomorphism and $\hat{f}$ is the identity function on $\mathcal{P}(W_n)$. In fact, let $A \in \mathcal{SC}(n)$ and let $\boldsymbol{\Lambda}(A):=\mu$. Thus we can write $\hat{\mu}=\boldsymbol{\Lambda}^{+}(A) \sqcup -\boldsymbol{\Lambda}^{-}(A)$ and so it is clear that $ \boldsymbol{\Lambda}(\hat{\mu})=\mu$ for every $\mu \in \mathcal{BP}(n)$. Therefore, for every $A \in \mathcal{SC}(n)$, we get
\begin{align*}
    (G\circ \boldsymbol{\Lambda})(A)&=W_{\hat{\mu}}\\
    &=\hat{f}(W_{\hat{\mu}})\\
    &=\hat{f} \circ F(A).
\end{align*}
Hence the above diagram is commutative. The soft kernel of this soft group homomorphism equals to the set $\{(-1,-1,\cdots,-1)\}$, since $F((-1,-1,\cdots,-1))=\{e\}=Kerf$ is the trivial subgroup of $W_n$.

\vspace*{0.3cm}

\end{document}